
\documentclass[letterpaper, 10 pt, conference]{ieeeconf}  

\IEEEoverridecommandlockouts

\usepackage{amsmath,amssymb,amsfonts}
 \usepackage{algorithm}
\usepackage{algorithmicx}
\usepackage{algpseudocode}
\usepackage{graphicx}
\usepackage{subcaption}
\usepackage{textcomp}
\usepackage{dsfont}

\usepackage{enumitem}
\usepackage{cite}
\usepackage{color}

\newtheorem{theorem}{\bf Theorem}

\newtheorem{remark}{\bf Remark}

\newtheorem{lemma}{\bf Lemma}

\title{\LARGE \bf
Covert Vehicle Misguidance and Its Detection: \\ A Hypothesis Testing Game over Continuous-Time Dynamics
}

\author{Takashi Tanaka$^{1}$ \and Kenji Sawada$^{2}$ \and Yohei Watanabe$^{3}$ \and Mitsugu Iwamoto$^{3}$
\thanks{$^{1}$ School of Aeronautics and Astronautics, School of Electrical and Computer Engineering, Purdue University
(e-mail: \texttt{tanaka16@purdue.edu}).}%
\thanks{$^{2}$The Info-Powered Energy System Research Center, The University of Electro-Communications
        (e-mail: {\tt\small knj.sawada@uec.ac.jp}).}%
\thanks{$^{3}$Department of Informatics, The University of Electro-Communications
       (e-mail:  {\tt\small \{watanabe,mitsugu\}@uec.ac.jp}).}%
}


\begin{document}

\maketitle
\thispagestyle{empty}
\pagestyle{empty}

\begin{abstract}

We formulate a stochastic zero-sum game over continuous-time dynamics to analyze the competition between the attacker, who tries to covertly misguide the vehicle to an unsafe region, versus the detector, who tries to detect the attack signal based on the observed trajectory of the vehicle.
Based on Girsanov's theorem and the generalized Neyman-Pearson lemma, we show that a constant bias injection attack as the attacker's strategy and a likelihood ratio test as the detector's strategy constitute the unique saddle point of the game.
We also derive the first-order and the second-order exponents of the type II error as a function of the data length.
\end{abstract}

\section{Introduction}
\label{sec:introduction}
False data injection (FDI) attacks are widely recognized as major threats to control systems. In \cite{bhatti2017hostile}, the authors performed a field experiment to misguide a 65-meter yacht to its unintended destination via GPS spoofing and demonstrated the vulnerability of modern maritime vessels to deceptive sensor data injection. 
In their experiment, the authors showed that a GPS deception attack, if carefully designed, can be disguised as the effects of natural disturbances such as slowly changing ocean currents and winds, and is difficult to detect unless the controller has an alternative source of reliable sensor data (e.g., radar and visual bearing).

In many circumstances, system faults (including malicious attacks) must be detected and isolated by continuous monitoring of the sensor readouts. 
In the vessel misguidance example  \cite{bhatti2017hostile}, the spoofed GPS signal may be distinguished from the natural background noise by an appropriate statistical test. 
However, knowing that the system is continuously monitored, a rational attacker will conduct a covert attack, maximizing the attack's impact while avoiding detection.
Hence, a zero-sum game arises between the attacker and the detector, where the detector's purpose is to design the ``most effective" statistical test for attack detection, whereas the attacker tries to inject the ``most stealthy" attack signal.

Similar games between the attacker and the detector have been studied by many authors in the systems and control community.
For example, the works \cite{bai2017data, guo2018worst,shang2021worst} adopted the hypothesis testing theory to characterize covert FDI attacks against control systems. 
Invoking Stein's lemma, \cite{bai2017data} introduced the notion of $\epsilon$-stealthiness as measured by relative entropy. 
The worst-case degradation of linear control systems attainable by $\epsilon$-stealthy attacks was studied in \cite{guo2018worst,shang2021worst}.
Sequential and composite hypothesis testing frameworks have also been proposed (e.g., \cite{zhang2017statistical,salimi2019sequential} to name a few) for anomaly detection.

Despite recent progress, existing applications of hypothesis testing frameworks to control systems are limited to discrete-time settings.
The goal of this paper is to broaden the scope of the literature by formulating the aforementioned zero-sum game in continuous time based on the generalized Neyman-Pearson theory \cite{cvitanic2001generalized}.
We make the following methodological contributions:
\begin{enumerate}[leftmargin=*]
\item We propose a novel zero-sum game formulation to model the competition between the attacker and the detector over continuous-time dynamics.
Instead of taking relative entropy as a stealthiness measure for granted (the operational meaning of relative entropy in continuous-time hypothesis testing scenarios is not well-established in the literature), we use more fundamental quantities, such as probabilities of type I and type II errors and the probability of successful attacks, to formulate the game.
While the game considered in this paper is simple, the results and methodologies we present are canonical and allow for various generalizations in future studies.
\item We show that a constant bias injection attack as the attacker's strategy and a likelihood ratio test as the detector's strategy constitute the unique saddle point of the game.
The proof is based on Girsanov's theorem and the generalized Neyman-Pearson lemma.
\item We analyze the exponent of the type II error as a function of the horizon length of the game and show that the first-order asymptote coincides with the relative entropy.
This result is reminiscent of classical Stein's lemma. We also quantify the second-order asymptote, providing a tighter estimate of the error probability in the finite horizon length regime.
\end{enumerate}

\subsubsection*{Notation}
The normal distribution with mean $m$ and covariance $\sigma^2$ is denoted by $\mathcal{N}(m,\sigma^2)$, and the cumulative distribution function of the standard normal distribution is denoted by
$\Phi(x)=\frac{1}{\sqrt{2\pi}} \int_{-\infty}^x \exp(-\frac{t^2}{2})dt$.
We write $[ \; \cdot \;]^+:=\max\{0,\cdot\}$.
$\mathds{1}_{\{\cdot\}}$ represents the indicator function.
$C[0,T]$ is the space of continuous functions $x: [0,T]\rightarrow \mathbb{R}$.
The Radon-Nikodym derivative of a probability measure $\mu$ with respect to a probability measure 
$\nu$ is denoted by
$\frac{d\mu}{d\nu}$.

\section{Problem formulation}
\label{sec:problem}
Inspired by the vessel misguidance \cite{bhatti2017hostile}, we formulate a stochastic zero-sum game modeling the competition between an attacker, who tries to covertly misguide the vehicle to an unsafe region, versus a detector, who tries to detect the existence of the attack based on the observed trajectory of the vehicle.
For simplicity, we model the trajectory of the vehicle (deviation from the nominal trajectory) as a continuous-time, scalar-valued Ito process $x_t$  over the time interval $0 \leq t \leq T$ defined by the following stochastic differential equation:
\begin{equation}
\label{eq:sde}
dx(t) = \theta(t)dt + dw(t), x(0)=0.
\end{equation}
Here, $w(t)$ is the standard Brownian motion in the underlying probability space $(\Omega, \mathcal{F}, \mu)$. For each $0 \leq t \leq T$, we denote by $\mathcal{F}(t)\subseteq \mathcal{F}$ the filtration of the process $w(t)$. 
We call the drift term $\theta: [0,T]\rightarrow \mathbb{R}$ the attack signal, which is chosen by the attacker. The attack signal $\theta$ is assumed to be a Borel measurable function such that $\int_0^T |\theta(t)|dt<\infty$ to guarantee the existence of the strong solution to \eqref{eq:sde}.

As shown in Fig.~\ref{fig:h0h1}, we consider the terminal condition such that $x(T)> Td$ unsafe, where $d>0$ is a given constant. When there is no attack (i.e., $\theta(t)=0, \forall t \in [0,T]$), we have $x(T)\sim\mathcal{N}(0, T)$. Therefore, the probability of the terminal state being unsafe is $\Phi(-\sqrt{T}d)$. This probability can be altered by injecting a non-zero attack signal. 

\begin{figure}[t]
\vspace{2mm}
\centering    \includegraphics[width=0.94\columnwidth]{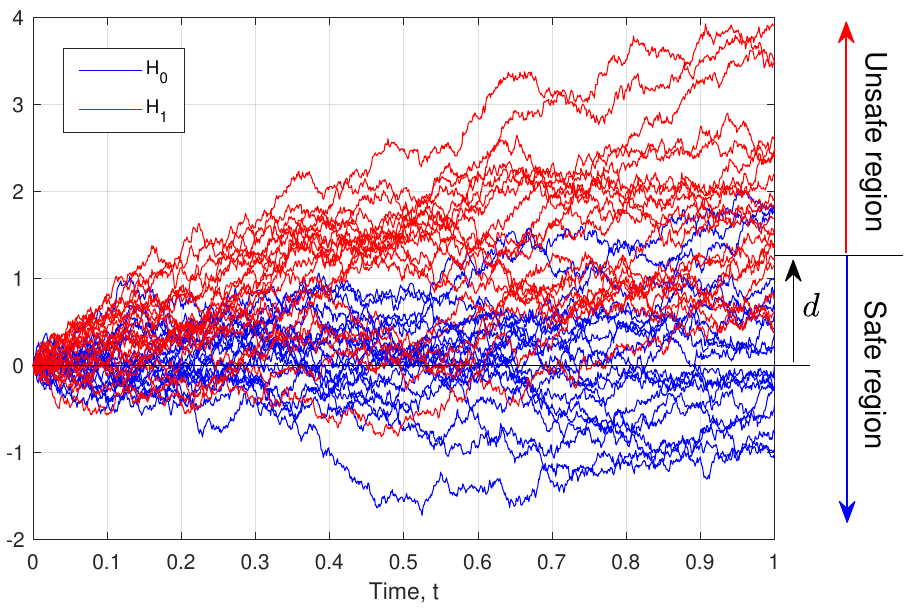}
    \caption{Sample paths of \eqref{eq:sde} with $\theta(t)=0$ (blue) and sample paths \eqref{eq:sde} with $\theta(t)=2$  (red). We assume $T=1$ in this plot. }
    \label{fig:h0h1}
    \vspace{-5mm}
\end{figure}

In this paper (except in Section~\ref{sec:feedback} where we consider feedback policies), we restrict the attacker's strategy to the class of open-loop policies.
That is, the attacker must fix an attack signal $\theta: [0,T]\rightarrow \mathbb{R}$ \textit{a priori} without observing the state $x(t)$.
We also restrict the attacker to the space of pure strategies, i.e., the action is not randomized.
With this setup, we have $x(T)\sim\mathcal{N}(m, T)$ with $m =\int_0^T \theta(t)dt$.
Consequently, the probability of the terminal state being unsafe is $\Phi(\frac{m}{\sqrt{T}}-\sqrt{T}d)$. In the sequel, we call $\gamma(\theta):=\Phi(\frac{m}{\sqrt{T}}-\sqrt{T}d)$ the attack success rate. 
We assume that, whenever the attack is applied, the attacker must ensure that the attack success rate is beyond a given threshold $c$, i.e., $\gamma(\theta) \geq c$.
Notice that requiring $\gamma(\theta) \geq c$ is equivalent to imposing a constraint
\begin{equation}
\label{eq:int_theta}
\int_0^T \theta(t)dt\geq \sqrt{T}\Phi^{-1}(c)+Td
\end{equation}
on the attack signal $\theta$.
In the sequel, we assume $c>\frac{1}{2}$ and $d>0$, which ensures that the quantity \eqref{eq:int_theta} is positive.

From the detector's viewpoint, it is not known in advance if the vehicle's operation is nominal or under attack.
Therefore, the detector's task is to determine if the observed trajectory $x$ is generated by \eqref{eq:sde} with $\theta=0$, or if a non-zero attack signal $\theta$ is injected. The former scenario is the null hypothesis $H_0$, whereas the latter is the alternative $H_1$:
\begin{align}
H_0 &: \theta(t)=0 \;\; \forall t \in [0,T] \label{eq:h0} \\
H_1 &: \int_0^T \theta(t)dt\geq \sqrt{T}\Phi^{-1}(c)+Td. \label{eq:h1} 
\end{align}
Notice that this is a composite hypothesis testing problem since, while the null hypothesis $H_0$ is simple, the alternative $H_1$ contains a family of functions $\theta$.

The role of the detector is to design a hypothesis testing algorithm  $\phi: C[0,T]\rightarrow \{0,1\}$ such that
\begin{equation}
\label{eq:phi}
\phi(x)=\begin{cases}
0 & \text{``No alarm"} \\
1 & \text{``Alarm"}
\end{cases}.
\end{equation}
The detector's decision \eqref{eq:phi} is made \textit{a posteriori} after observing the entire state trajectory  $x(t), 0\leq t\leq T$.

The quality of a testing algorithm $\phi$ is measured in terms of the probability $\alpha(\phi)$ of a false alarm (also known as Type I error) and the probability $\beta(\theta, \phi)$ of a detection failure (also known as Type II error):
\begin{align}
\alpha(\phi)&:= \text{Pr}\{\phi(x)=1 \;\; \big| \; H_0 \text{ is true}\} \\
\beta(\theta, \phi)&:= \text{Pr}\{\phi(x)=0 \;\; \big| \; H_1 \text{ is true}\}.
\end{align}
We say that a testing algorithm $\phi$ is admissible if $\alpha(\phi)\leq \epsilon$ for some given constant $\epsilon \in (0,\frac{1}{2})$.

In this paper, we model the interaction of the detector and the attacker as a zero-sum game.
Since the false alarm rate $\alpha(\phi)$ does not depend on the attacker's policy $\theta$, it is convenient to use $\alpha(\phi)$ as a constraint on the detector's policy.
Similarly, since the attack success rate $\gamma(\theta)$ does not depend on the detector's policy $\phi$, it is convenient to use $\gamma(\theta)\geq c$ as a constraint on the attacker's policy. 
In contrast, the detection failure rate $\beta(\theta,\phi)$ depends on both parties' policies which is minimized by the detector and maximized by the attacker.
Therefore, we formulate a mini-max game:
\begin{equation}
\label{eq:minimax}
p^*=\min_{\phi:\alpha(\phi)\leq \epsilon} \max_{\theta: \gamma(\theta)\geq c} \beta(\theta, \phi)
\end{equation}
and its dual:
\begin{equation}
\label{eq:maxmin}
d^*= \max_{\theta: \gamma(\theta)\geq c} 
\min_{\phi:\alpha(\phi)\leq \epsilon}
\beta(\theta, \phi).
\end{equation}
By the weak duality, $p^*\geq d^*$ holds trivially. 
In this paper, we will provide a unique pair of policies $(\theta^*, \phi^*)$ that constitutes a saddle point of the game, satisfying
\begin{equation}
\label{eq:saddle}
\beta(\theta, \phi^*) \leq \beta(\theta^*, \phi^*) \leq \beta(\theta^*, \phi)
\end{equation}
for all $\phi$ with $\alpha(\phi)\leq \epsilon$ and for all $\theta$ with $\gamma(\theta)\geq c$.
Consequently, the strong duality $p^*=d^*$ will be established, and the value of the game $\beta(\theta^*, \phi^*)$ will be computed.

\section{Preliminaries}
This section summarizes the mathematical ingredients needed to derive the main result.
\subsection{Girsanov's theorem}
Given two random processes $x$ and $w$ related by \eqref{eq:sde}, we have already defined the probability measure $\mu$ as the one in which $w$ is the standard Brownian motion. In $\mu$, $x$ is not the Brownian motion. However, Girsanov's theorem \cite[Theorem 8.6.3]{oksendal2003stochastic} \cite[Theorem 6.3]{liptser1977statistics} states that there exists an alternative measure $\mu_\theta$ in which $x$ is the standard Brownian motion. Specifically, for each sample path $x$, the likelihood ratio $\frac{d\mu}{d\mu_\theta}(x)$ is given by
\begin{subequations}
\label{eq:girsanov} 
\begin{align}
\frac{d\mu}{d\mu_\theta}(x)&=\exp\left\{\int_0^T \theta(t)dw(t)+\frac{1}{2}\int_0^T \theta^2(t)dt\right\} \label{eq:girsanov1} \\
&=\exp\left\{\int_0^T \theta(t)dx(t)-\frac{1}{2}\int_0^T \theta^2(t)dt\right\}. \label{eq:girsanov2}
\end{align}
\end{subequations}
That is, observing a particular sample path $x$ as an outcome of \eqref{eq:sde} (this occurs with probability $\propto \mu(x)$) is $\frac{d\mu}{d\mu_\theta}(x)$ times more likely than observing the same sample path $x$ as a realization of the standard Brownian motion (this occurs with probability $\propto \mu_\theta(x)$).

The false alarm rate $\alpha(\phi)$ is defined as the probability of $\phi(x)=1$ when there is no attack, i.e., when $x$ is the standard Brownian motion. Since $x$ is the standard Brownian motion in $\mu_\theta$, this quantity can be expressed as $\alpha(\phi)=\mathbb{E}^{\mu_\theta}\left[\phi(x)\right]$. Despite the appearance of $\theta$ on the right-hand side, the false alarm rate does not depend on $\theta$.
In contrast, the detection failure rate  $\beta(\theta, \phi)=1-\mathbb{E}^{\mu}\left[\phi(x)\right]$ depends on $\theta$, as the distribution of $x$ depends on $\theta$ under the measure $\mu$ in which $w$ is the standard Brownian motion.

\subsection{Neyman-Pearson lemma}
Consider the binary hypothesis testing problem in which the simple null hypothesis \eqref{eq:h0} is to be discriminated from the simple alternative \eqref{eq:h1} with a fixed $\theta$ satisfying $\gamma(\theta)\geq c$.
\begin{lemma}
\label{lem:np}
The testing algorithm $\phi: C[0,T]\rightarrow \{0,1\}$ that minimizes $\beta(\theta, \phi)$ subject to the constraint $\alpha(\phi)\leq \epsilon$ is given by
\begin{equation}
\label{eq:phi_lambda}
\phi(x)=\begin{cases}
0 & \text{ if } z_\theta(x, T) \leq \lambda^* \\
1 & \text{ if } z_\theta(x, T) > \lambda^* 
\end{cases} \quad \mu\text{-almost surely}
\end{equation}
where
\begin{equation}
z_\theta(x, t)=\exp\left\{\int_0^t \! \theta(s)dx(s)-\frac{1}{2}\int_0^t \!\theta^2(s)ds\right\}
\end{equation}
and $\lambda^*>0$ is a constant satisfying $\alpha(\phi)= \epsilon$.
\end{lemma}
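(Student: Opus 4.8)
The plan is to reduce the statement to the classical Neyman--Pearson lemma by transporting everything to the reference measure $\mu_\theta$, under which $x$ is a standard Brownian motion. By Girsanov's theorem, $\mu \ll \mu_\theta$ on $\mathcal{F}(T)$ with $\frac{d\mu}{d\mu_\theta}(x) = z_\theta(x,T)$, and the law of the observed path $x$ is $\mu_\theta$ under $H_0$ and $\mu$ under $H_1$. Hence $\alpha(\phi) = \mathbb{E}^{\mu_\theta}[\phi(x)]$ and $\beta(\theta,\phi) = 1 - \mathbb{E}^{\mu}[\phi(x)] = 1 - \mathbb{E}^{\mu_\theta}[\phi(x)\,z_\theta(x,T)]$, so minimizing $\beta(\theta,\phi)$ over $\{\phi:\alpha(\phi)\le\epsilon\}$ is exactly the textbook problem of maximizing the power $\mathbb{E}^{\mu_\theta}[\phi(x)\,z_\theta(x,T)]$ subject to the size constraint $\mathbb{E}^{\mu_\theta}[\phi(x)]\le\epsilon$, whose candidate solution is the likelihood ratio test $\phi^*$ with rejection region $\{z_\theta(x,T)>\lambda^*\}$. (Here $\theta$ is implicitly taken in $L^2[0,T]$, as already required for $z_\theta$ in \eqref{eq:girsanov}.)

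Next I would exhibit $\lambda^*$. Under $\mu_\theta$ the path $x$ is a standard Brownian motion, so $\log z_\theta(x,T) = \int_0^T \theta(s)\,dx(s) - \tfrac12\int_0^T \theta^2(s)\,ds$ is Gaussian with variance $\sigma^2 := \int_0^T \theta^2(s)\,ds$. The constraint $\gamma(\theta)\ge c$ with $c>\tfrac12$, $d>0$ forces $\int_0^T\theta(t)\,dt \ge \sqrt{T}\Phi^{-1}(c)+Td>0$, so $\theta\not\equiv 0$ and $\sigma^2>0$; therefore $z_\theta(x,T)$ has, under $\mu_\theta$, a continuous distribution fully supported on $(0,\infty)$. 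Consequently $\lambda\mapsto\mu_\theta\{z_\theta(x,T)>\lambda\}$ is continuous and strictly decreasing from $1$ to $0$, so there is a unique $\lambda^*>0$ with $\mu_\theta\{z_\theta(x,T)>\lambda^*\}=\epsilon$, i.e. $\alpha(\phi^*)=\epsilon$; moreover $\mu_\theta\{z_\theta(x,T)=\lambda^*\}=0$, which is precisely why no randomization at the boundary is needed and the optimizer is a pure test of the stated form.

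Finally I would run the standard comparison. For any admissible $\phi$, the product $\bigl(\phi^*(x)-\phi(x)\bigr)\bigl(z_\theta(x,T)-\lambda^*\bigr)$ is nonnegative pointwise: on $\{z_\theta(x,T)>\lambda^*\}$ both factors are $\ge 0$, and on $\{z_\theta(x,T)\le\lambda^*\}$ both are $\le 0$. Taking $\mathbb{E}^{\mu_\theta}$ gives $\mathbb{E}^{\mu_\theta}[(\phi^*-\phi)z_\theta] \ge \lambda^*\,\mathbb{E}^{\mu_\theta}[\phi^*-\phi] = \lambda^*\bigl(\epsilon-\alpha(\phi)\bigr)\ge 0$, hence $\mathbb{E}^{\mu}[\phi^*] = \mathbb{E}^{\mu_\theta}[\phi^* z_\theta] \ge \mathbb{E}^{\mu_\theta}[\phi z_\theta] = \mathbb{E}^{\mu}[\phi]$, i.e. $\beta(\theta,\phi^*)\le\beta(\theta,\phi)$. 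Equality throughout forces $(\phi^*-\phi)(z_\theta-\lambda^*)=0$ $\mu_\theta$-a.s., so any other minimizer agrees with $\phi^*$ off the $\mu_\theta$-null (hence $\mu$-null) set $\{z_\theta(x,T)=\lambda^*\}$, which gives uniqueness. The only step I expect to require genuine care, rather than routine change-of-measure bookkeeping, is the atomlessness of $z_\theta(x,T)$ under $\mu_\theta$ — i.e. ensuring the size $\epsilon$ is attained exactly and the test need not be randomized — which is exactly where the attack-success constraint, ruling out $\theta\equiv 0$, is used.
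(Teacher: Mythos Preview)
Your proposal is correct and follows essentially the same route as the paper: transport to $\mu_\theta$ via Girsanov, observe that under $\mu_\theta$ the log-likelihood ratio $\int_0^T\theta\,dx-\tfrac12\int_0^T\theta^2$ is Gaussian with positive variance (using $\gamma(\theta)\ge c$ to rule out $\theta\equiv 0$), and conclude that $z_\theta(x,T)$ is atomless so the Neyman--Pearson test attains size exactly $\epsilon$ without randomization. The only difference is that the paper invokes the generalized Neyman--Pearson lemma of \cite{cvitanic2001generalized} as a black box and then checks the derandomization condition, whereas you supply the standard pointwise comparison $(\phi^*-\phi)(z_\theta-\lambda^*)\ge 0$ directly, making your version self-contained and additionally yielding a.s.\ uniqueness of the optimal test.
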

\begin{proof}
We accept the Neyman-Pearson lemma \cite{cvitanic2001generalized}, which states that the optimal hypothesis test to discriminate a null hypothesis $x\sim \mu_\theta(x)$ from an alternative $x\sim \mu(x)$ is in general given by a randomized policy of the form
\begin{equation}
\phi(x)=\mathds{1}_{\{z_\theta(x, T)>\lambda^*\}}+b\cdot \mathds{1}_{\{z_\theta(x, T)=\lambda^*\}}
\end{equation}
$\mu$-almost surely, where $\lambda^*=\inf\{\lambda\geq 0: \alpha(\phi)\leq \epsilon\}$ and $b$ is an appropriate binary random variable.
To complete the proof based on this result, it is sufficient to show that $\lambda^*$ in our setup attains $\alpha(\phi)=\epsilon$, and $\text{Pr}\{z_\theta(x, T)=\lambda^*\}=0$.

Notice that 
\begin{equation}
\alpha(\phi)=\mathbb{E}^{\mu_\theta}\left[\phi(x)\right]=\mathbb{E}^{\mu_\theta}\left[ \mathds{1}_{\left\{z_\theta(x, T)>\lambda\right\}}\right].
\end{equation}
Since $x$ is the standard Brownian motion in $\mu_\theta$, $\alpha(\phi)$ can be written in term of the standard Brownian motion $x$ as
\begin{align}
&\alpha(\phi)=\text{Pr}\left\{
\exp\left\{\int_0^T \theta(t)dx(t)-\frac{1}{2}\int_0^T \theta^2(t)dt\right\}>\lambda
\right\} \nonumber \\
&=\text{Pr}\left\{
\int_0^T \theta(t)dx(t)>\frac{1}{2}\int_0^T \theta^2(t)dt+\log\lambda
\right\}.
\end{align}
Since the random variable $X:=\int_0^T \theta(t)dx(t)$ has a continuous cumulative distribution function, $\alpha(\phi)$ is a continuous, non-increasing function of $\lambda$ such that $\alpha(\phi)\rightarrow 1$ as $\lambda\rightarrow 0$ and $\alpha(\phi)\rightarrow 0$ as $\lambda\rightarrow +\infty$.
Therefore, for each $\epsilon\in(0,\frac{1}{2})$, there exists $\lambda^*>0$, which is the smallest constant satisfying $\alpha(\phi)\leq \epsilon$.
Using the fact that $X$ has a continuous cumulative distribution function, we also have $\text{Pr}\{z_\theta(x, T)=\lambda^*\}=\text{Pr}\{X=\frac{1}{2}\int_0^T \theta^2(t)dt+\log\lambda^* \} =0.$
\end{proof}

For a fixed $\theta$ such that $\gamma(\theta)\geq c$, let $\phi$ be the optimal test given by Lemma~\ref{lem:np}.
Then, the detection failure rate can be written as
\begin{align}
\beta(\theta,\phi)&=1-\mathbb{E}^\mu \left[\phi(x)\right] \nonumber \\
&=1-\mathbb{E}^{\mu_\theta} \left[\phi(x)z_\theta(x,T)\right] \nonumber \\
&=1-\lambda^* \mathbb{E}^{\mu_\theta} \left[\phi(x)\right]-\mathbb{E}^{\mu_\theta} \left[\phi(x)(z_\theta(x,T)-\lambda^*)\right] \nonumber \\
&=1-\lambda^* \epsilon -\mathbb{E}^{\mu_\theta}\left[z_\theta(x,T)-\lambda^*\right]^+ \label{eq:beta_simple}
\end{align}
In the last step, we used the fact that $\mathbb{E}^{\mu_\theta} \left[\phi(x)\right]=\alpha(\phi)=\epsilon$ (Lemma~\ref{lem:np}) and \eqref{eq:phi_lambda}.

\section{Main Result}
The main result of this paper is the following:
\begin{theorem}
\label{thm:main}
The following pair of policies form a saddle point of the zero-sum game \eqref{eq:minimax} and \eqref{eq:maxmin}:
\begin{align}
\theta^*(t)&=\bar{\theta}:=\frac{1}{\sqrt{T}}\Phi^{-1}(c)+d \;\; \forall t\in [0,T] \label{eq:theta_star}\\
\phi^*(x)&=
\begin{cases}
0 & \text{ if } x(T)\leq \sqrt{T}\Phi^{-1}(1-\epsilon) \\
1 & \text{ if } x(T)> \sqrt{T}\Phi^{-1}(1-\epsilon). \label{eq:phi_star}
\end{cases}
\end{align}
Moreover, the saddle point $(\theta^*,\phi^*)$ is unique in the sense that if $(\theta', \phi')$ is another saddle point, then $\theta^*(t)=\theta'(t)$ holds almost everywhere in $[0,T]$ and $\phi^*(x)=\phi'(x)$ holds $\mu$-almost surely.
Furthermore, the value of the game is 
\begin{equation}
\label{eq:value}
\beta(\theta^*, \phi^*)=\Phi(\Phi^{-1}(1-\epsilon)-\Phi^{-1}(c)-\sqrt{T}d).
\end{equation}
Specifically, if $1-\epsilon=c$, then $\beta(\theta^*, \phi^*)=\Phi(-\sqrt{T}d)$.
\end{theorem}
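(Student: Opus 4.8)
The plan is to verify the two inequalities in \eqref{eq:saddle} directly, read off the value \eqref{eq:value} from that computation, and then obtain uniqueness from the interchangeability of saddle points together with the essential uniqueness of Neyman--Pearson tests.

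\emph{Optimality of $\phi^*$ against $\theta^*$.} Against the fixed simple alternative $\theta^*\equiv\bar\theta$, Lemma~\ref{lem:np} says the best admissible detector is the likelihood ratio test on $z_{\bar\theta}(x,T)$. Since $\bar\theta$ is constant, $\int_0^T\bar\theta\,dx(t)=\bar\theta\,x(T)$, so $z_{\bar\theta}(x,T)=\exp\{\bar\theta x(T)-\tfrac12\bar\theta^2 T\}$ is a strictly increasing function of $x(T)$; hence the likelihood ratio test is a threshold test on $x(T)$, and calibrating the threshold to $\alpha=\epsilon$ (using $x(T)\sim\mathcal N(0,T)$ under $H_0$, so $\text{Pr}\{x(T)>\tau\}=1-\Phi(\tau/\sqrt T)$) gives $\tau=\sqrt T\,\Phi^{-1}(1-\epsilon)$, i.e.\ exactly $\phi^*$ in \eqref{eq:phi_star}. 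This yields $\beta(\theta^*,\phi^*)\le\beta(\theta^*,\phi)$ for all admissible $\phi$. For the other inequality, write $m=\int_0^T\theta(t)dt$ for any feasible $\theta$; under $H_1$, $x(T)\sim\mathcal N(m,T)$, so $\beta(\theta,\phi^*)=\Phi(\Phi^{-1}(1-\epsilon)-m/\sqrt T)$, which is strictly decreasing in $m$. Since feasibility \eqref{eq:int_theta} forces $m\ge\sqrt T\Phi^{-1}(c)+Td$, the maximum over feasible $\theta$ is attained precisely when $m=\sqrt T\Phi^{-1}(c)+Td$, which holds for $\theta^*$ because $\bar\theta T=\sqrt T\Phi^{-1}(c)+Td$; hence $\beta(\theta,\phi^*)\le\beta(\theta^*,\phi^*)$. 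Substituting $m=\sqrt T\Phi^{-1}(c)+Td$ gives $\beta(\theta^*,\phi^*)=\Phi(\Phi^{-1}(1-\epsilon)-\Phi^{-1}(c)-\sqrt T d)$, i.e.\ \eqref{eq:value}, and the case $1-\epsilon=c$ is immediate. Since a saddle point exists, strong duality $p^*=d^*=\beta(\theta^*,\phi^*)$ follows.

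\emph{Uniqueness.} Let $(\theta',\phi')$ be any other saddle point. By the standard interchangeability of saddle points in a zero-sum game, $(\theta^*,\phi')$ and $(\theta',\phi^*)$ are again saddle points with the same value; in particular $\phi'$ best-responds to $\theta^*$ and $\theta'$ best-responds to $\phi^*$, and $\beta(\theta^*,\phi')=\beta(\theta^*,\phi^*)$. For $\phi'$: as in the proof of Lemma~\ref{lem:np} the law of $z_{\theta^*}(x,T)$ has no atoms and $\lambda^*>0$, so the pointwise inequality $(\phi^*(x)-\phi'(x))(z_{\theta^*}(x,T)-\lambda^*)\ge 0$ holds, while via Girsanov its $\mu_{\theta^*}$-expectation equals $\mathbb{E}^{\mu}[\phi^*-\phi']-\lambda^*(\alpha(\phi^*)-\alpha(\phi'))=0-\lambda^*(\epsilon-\alpha(\phi'))\le 0$; hence the integrand vanishes $\mu$-a.s., forcing $\phi'=\phi^*$ $\mu$-a.s. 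For $\theta'$: the monotonicity above forces $\int_0^T\theta'(t)dt=\sqrt T\Phi^{-1}(c)+Td$. It remains to show $\theta'$ is constant a.e.\ Since $\phi'=\phi^*$ $\mu$-a.s.\ and $\phi'$ best-responds to the simple alternative $\theta'$, $\phi^*$ must coincide $\mu$-a.s.\ with the Neyman--Pearson test for $\theta'$, namely $\mathds{1}_{\{\int_0^T\theta'(t)dx(t)>\kappa\}}$ for some finite $\kappa$ (here $\int_0^T\theta'^2 dt>0$, else $\int_0^T\theta' dt=0$, contradicting $c>\tfrac12$, $d>0$). Thus the events $\{x(T)>\sqrt T\Phi^{-1}(1-\epsilon)\}$ and $\{\int_0^T\theta'(t)dx(t)>\kappa\}$ agree $\mu_{\theta'}$-a.s. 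Under $\mu_{\theta'}$, $x$ is standard Brownian motion, so $Z:=x(T)$ and $Y:=\int_0^T\theta'(t)dx(t)$ are jointly Gaussian and nondegenerate with $\text{Pr}\{Y>\kappa\}=\text{Pr}\{Z>\sqrt T\Phi^{-1}(1-\epsilon)\}=\epsilon\in(0,1)$; two half-line events of such variables coincide a.s.\ only when the variables are perfectly positively correlated, so $Y=aZ$ a.s.\ with $a>0$, i.e.\ $\int_0^T(\theta'(t)-a)\,dx(t)=0$ a.s. Since this Wiener integral has variance $\int_0^T(\theta'(t)-a)^2dt$, we get $\theta'(t)=a$ for a.e.\ $t$, and then $aT=\sqrt T\Phi^{-1}(c)+Td$ gives $a=\tfrac1{\sqrt T}\Phi^{-1}(c)+d=\bar\theta$, so $\theta'=\theta^*$ a.e.

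I expect the main obstacle to be precisely this last step: converting the $\mu$-a.s.\ coincidence of the two rejection regions into a.e.\ equality of drift functions, which hinges on the rigidity of jointly Gaussian Wiener integrals (two half-lines agreeing a.s.\ forces perfect correlation, which forces proportional — hence, after matching the zero means, equal — integrands). Everything else reduces to one-dimensional Gaussian-CDF manipulations and the Neyman--Pearson lemma already recorded in Lemma~\ref{lem:np}.
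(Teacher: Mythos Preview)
Your verification of the saddle point inequalities and of the value \eqref{eq:value} is essentially identical to the paper's: you reduce the $\phi$-side to the Neyman--Pearson test against the constant drift $\bar\theta$ and the $\theta$-side to monotonicity of $\beta(\theta,\phi^*)$ in $m=\int_0^T\theta$, exactly as in Section~\ref{sec:saddle}.

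Your uniqueness argument, however, takes a genuinely different route. The paper attacks uniqueness by \emph{solving the max--min problem}: using \eqref{eq:beta_simple}, it reduces to showing $\mathbb{E}^\mu f(\zeta_{\theta^*}(T))\le \mathbb{E}^\mu f(\zeta_\theta(T))$ for $f(z)=[z-\lambda^*]^+$, and proves this via a Dambis--Dubins--Schwarz style time change $\Lambda(t)=\int_0^t \theta^2/\bar\theta^2$, Cauchy--Schwarz (giving $\Lambda(T)\ge T$ with equality iff $\theta\equiv\bar\theta$), and the submartingale property of $f(\zeta_\theta(\cdot))$. You instead invoke interchangeability of saddle points to force $\phi'$ to be a best response to $\theta^*$ and $\phi^*$ to be a best response to $\theta'$, then settle both by Neyman--Pearson rigidity: the standard pointwise inequality pins down $\phi'=\phi^*$ a.s., and equality of the rejection regions $\{x(T)>\tau\}$ and $\{\int_0^T\theta'\,dx>\kappa\}$ under the Wiener measure forces perfect correlation of the two Wiener integrals, hence $\theta'\equiv$ const.\ a.e., hence $\theta'=\bar\theta$. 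This is correct: for nondegenerate jointly Gaussian centered $(Z,Y)$ with $\Pr\{Z>a\}=\Pr\{Y>b\}=\epsilon\in(0,\tfrac12)$, the events $\{Z>a\}$ and $\{Y>b\}$ coincide a.s.\ only if $\rho=1$, which is precisely Cauchy--Schwarz equality $\int_0^T(\theta'(t)-a)^2dt=0$.

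What each approach buys: the paper's Lemma~2 is a stronger standalone statement---it identifies $\theta^*$ as the \emph{unique max--min attack} rather than merely the unique saddle attack, and its time-change machinery would generalize more readily beyond the deterministic-drift class. Your argument is more elementary and self-contained: it never leaves one-dimensional Gaussian computations and the Neyman--Pearson equality case, and it cleanly exploits that both optimal tests are half-spaces in a common Gaussian vector. For the theorem as stated, your route is arguably tighter.
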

\begin{remark}
Theorem~\ref{thm:main} states that the max-min policy (the most covert attack) is a constant bias injection $\theta(t)=\bar{\theta}$, where the constant $\bar{\theta}$ is chosen to be the smallest value satisfying $\gamma(\theta)\geq c$. 
Conversely, the minimax policy $\phi^*(x)$ (i.e., the most powerful hypothesis test) only examines the final value $x(T)$ of the observed sample path $x$.
As we will see below, $\phi^*(x)$ can be viewed as the Neyman-Pearson type binary hypothesis testing algorithm that discriminates $H_0: dx_t=dw_t$ from $H_1: dx_t=\bar{\theta}dt+dw_t$.
\end{remark}
\begin{remark}
Suppose both $(\theta^*,\phi^*)$ and $(\theta',\phi')$ are saddle points of the game. Then, Theorem~\ref{thm:main} states that they can only differ on a set with measure zero.
By the interchangeability of saddle points of two-person zero-sum games \cite{bacsar1998dynamic}, it also follows that $(\theta^*, \phi^*)$, $(\theta^*, \phi')$, $(\theta', \phi^*)$, and $(\theta', \phi')$ are all saddle points, and they attain the same value. 
\end{remark}

To prove Theorem~\ref{thm:main}, notice that for any fixed $\theta^*$ such that $\gamma(\theta^*)\geq c$ and $\phi^*$ such that $\alpha(\phi^*)\leq \epsilon$, we have
\begin{subequations}
\label{eq:inf_sup_thm}
\begin{align}
\inf_{\phi:\alpha(\phi)\leq \epsilon} \beta(\theta^*, \phi)
&\leq \sup_{\theta:\gamma(\theta)\geq c}\inf_{\phi:\alpha(\phi)\leq \epsilon} \beta(\theta, \phi) \label{eq:inf_sup_thm1} \\
&\leq \inf_{\phi:\alpha(\phi)\leq \epsilon}\sup_{\theta:\gamma(\theta)\geq c} \beta(\theta, \phi) \label{eq:inf_sup_thm2}\\
&\leq \sup_{\theta:\gamma(\theta)\geq c} \beta(\theta, \phi^*) \label{eq:inf_sup_thm3}
\end{align}
\end{subequations}
where \eqref{eq:inf_sup_thm2} follows from the max-min inequality.
Hence, if the pair $(\theta^*, \phi^*)$ satisfies the saddle point condition
\begin{equation}
\label{eq:saddle2}
\sup_{\theta:\gamma(\theta)\geq c}\beta(\theta, \phi^*) = \beta(\theta^*, \phi^*) = \inf_{\phi:\alpha(\phi)\leq \epsilon} \beta(\theta^*, \phi)
\end{equation}
then the chain of inequalities \eqref{eq:inf_sup_thm} holds with equality and the strong duality 
$p^*=d^*=\beta(\theta^*, \phi^*)$
is implied. 

Therefore, in Subsection~\ref{sec:saddle} below, we show that the pair $(\theta^*, \phi^*)$ given by \eqref{eq:theta_star} and \eqref{eq:phi_star} indeed satisfies the saddle point condition \eqref{eq:saddle2}.
However, such an argument is insufficient to prove that $(\theta^*, \phi^*)$ is the unique saddle point.
To establish the uniqueness result, notice that the first inequality \eqref{eq:inf_sup_thm1} implies that if $(\theta', \phi')$ is a saddle point, then $\theta'$ must be the max-min solution that attains
\begin{equation}
\label{eq:theta_unique}
\sup_{\theta:\gamma(\theta)\geq c}\inf_{\phi:\alpha(\phi)\leq \epsilon} \beta(\theta, \phi) = \inf_{\phi:\alpha(\phi)\leq \epsilon} \beta(\theta', \phi).
\end{equation}
and that $\phi'$ is the best response to $\theta'$.
In Subsection~\ref{sec:unique} below, we show that \eqref{eq:theta_unique} is attained uniquely by $\theta'=\theta^*$, and that $\phi^*$ is the unique best response to $\theta^*$.
This will establish the uniqueness of the saddle point $(\theta^*, \phi^*)$.

\subsection{Saddle point condition}
\label{sec:saddle}
We prove \eqref{eq:saddle2} by showing the first equality (optimality of $\theta^*$) and the second equality (optimality of $\phi^*$) separately. 

\subsubsection{Optimality of $\theta^*$}
We first prove that $\beta(\theta,\phi^*)\leq \beta(\theta^*,\phi^*)$ holds for all $\theta$ such that $\gamma(\theta)\geq c$.
Notice that the function $\phi^*(x)$ in \eqref{eq:phi_star} only depends on the terminal state $x(T)$. Moreover, under any admissible attack strategy, we have $x(T)\sim\mathcal{N}(m,T)$, where $m=\int_0^T \theta(t)dt \geq \sqrt{T}\Phi^{-1}(c)+Td$.
Therefore, to maximize the detection failure rate, it is optimal for the attacker to choose a strategy that attains the smallest admissible value of $m$. Hence, any function $\theta: [0,T]\rightarrow \mathbb{R}$ such that $\int_0^T \theta(t)dt = \sqrt{T}\Phi^{-1}(c)+Td$ is a best response to $\phi^*$.
Since such a class of functions contains $\theta^*$, we have
$\beta(\theta,\phi^*)\leq \beta(\theta^*,\phi^*)$.

\subsubsection{Optimality of $\phi^*$}
We next prove that $\beta(\theta^*, \phi^*) \leq \beta(\theta^*, \phi)$ holds for all $\phi$ such that $\alpha(\phi)\leq \epsilon$.
To this end, let the attacker's policy be fixed to $\theta^*$ in \eqref{eq:theta_star}.
Then, by Lemma~\ref{lem:np}, the optimal test $\phi$ is given by
\begin{equation}
\label{eq:phi_lambda_bar}
\phi(x)=\begin{cases}
0 & \text{ if } z_{\theta^*}(x,T) \leq \lambda^* \\
1 & \text{ if } z_{\theta^*}(x,T) > \lambda^* 
\end{cases}
\end{equation}
where 
\begin{subequations}
\begin{align}
z_{\theta^*}(x, T)&=\exp\left\{\int_0^T \bar{\theta}dx(t)-\frac{1}{2}\int_0^T{\bar{\theta}}^2 dt\right\} \\
&=\exp\left\{\bar{\theta}x(T)-\frac{T}{2}{\bar{\theta}}^2\right\}.
\end{align}
\end{subequations}
Hence, it is sufficient to show that \eqref{eq:phi_lambda_bar} is equivalent to \eqref{eq:phi_star}.

Since $x(T)\sim \mathcal{N}(0,T)$ under $\mu_\theta$,
\begin{subequations}
\begin{align}
\alpha(\phi)&=\mathbb{E}^{\mu_\theta}\left[\phi(x)\right] \\
&=\mu_\theta \left(\left\{\exp\left(\bar{\theta}x(T)-\frac{T}{2}{\bar{\theta}}^2\right)>\lambda^*\right\}\right) \\
&=1-\Phi\left(\frac{\sqrt{T}}{2}\bar{\theta}+\frac{\log \lambda^*}{\sqrt{T}\bar{\theta}}\right).
\end{align}
\end{subequations}
Solving $\alpha(\phi)=\epsilon$, we obtain
\begin{equation}
\label{eq:lambda_opt}
\lambda^*=\exp\left(\sqrt{T}\bar{\theta}\Phi^{-1}(1-\epsilon)-\frac{T}{2}\bar{\theta}^2\right).
\end{equation}
Substituting \eqref{eq:lambda_opt}
into
\eqref{eq:phi_lambda_bar}, we obtain \eqref{eq:phi_star}.

\subsection{Uniqueness of the saddle point}
\label{sec:unique}
We now solve the max-min problem on the left-hand side of \eqref{eq:theta_unique}.
Let the function $\theta: [0,T]\rightarrow \mathbb{R}$ be fixed.
Then, according to the Neyman-Pearson lemma, the best response is a threshold-based policy of the form:
\begin{equation}
\label{eq:phi_lambda2}
\phi(x)=\begin{cases}
0 & \text{ if } z_\theta(x,T) \leq \lambda^* \\
1 & \text{ if } z_\theta(x,T) > \lambda^* 
\end{cases}
\end{equation}
where
\begin{equation}
z_\theta(x,t)=\exp\left\{\int_0^t \! \theta(s)dx(s)-\frac{1}{2}\int_0^t \!\theta^2(s)ds\right\}
\end{equation}
and $\lambda^*>0$ is a constant that satisfies $\alpha(\theta)= \epsilon$.
Assuming the best response \eqref{eq:phi_lambda2} to the attack signal $\theta$, we obtain from \eqref{eq:beta_simple} that
\begin{align}
&\sup_{\theta:\gamma(\theta)\geq c}\inf_{\phi:\alpha(\phi)\leq \epsilon} \beta(\theta, \phi) \nonumber \\
&=\sup_{\theta:\gamma(\theta)\geq c}
1-\lambda^* \epsilon -\mathbb{E}^{\mu_\theta}\left[z_\theta(x,T)-\lambda^*\right]^+. \label{eq:sup_theta}
\end{align}
We will show that this supremum is attained by a constant function $\theta^*(t)=\bar{\theta}$ given by \eqref{eq:theta_star}, and that any $\theta$ that attains the supremum must coincide with \eqref{eq:theta_star} almost everywhere in $t\in [0,T]$.
Recall that the last term in \eqref{eq:sup_theta} means
\begin{align}
&\mathbb{E}^{\mu_\theta}\left[z_\theta(x,T)-\lambda^*\right]^+ \nonumber \\
&=\!\mathbb{E}^{\mu_\theta}\!\!\left[\exp\left\{\int_0^T\theta(t)dx(t)\!-\!\frac{1}{2}\!\int_0^T \!\!\theta^2(t)dt\right\}\!-\!\lambda^*\right]^+
\label{eq:inf_max_x}
\end{align}
Since $x(t)$ is the standard Brownian motion in $\mu_\theta$, and since $w(t)$ is the standard Brownian motion in $\mu$, the quantity \eqref{eq:inf_max_x} can also be written as
\begin{equation}
\label{eq:inf_max_w}
\mathbb{E}^{\mu}\left[\exp\left\{\int_0^T \! \theta(t)dw(t)\!-\!\frac{1}{2}\int_0^T \!\!\theta^2(t)dt\right\}\!-\!\lambda^*\right]^+ \!\!\!.
\end{equation}
Introducing
\begin{equation}
\label{eq:def_zeta}
\zeta_\theta(t):=\exp\left\{\int_0^t\theta(s)dw(s)-\frac{1}{2}\int_0^t \theta^2(s)ds\right\},
\end{equation}
\eqref{eq:inf_max_w} can also be written as $\mathbb{E}^\mu\left[\zeta_\theta(T)-\lambda^*\right]^+$.
Therefore, it is left to prove the following lemma:
\begin{lemma}
Let $\theta^*$ be given by \eqref{eq:theta_star}. Then,
\begin{equation}
\label{eq:lem_theta_opt}
\inf_{\theta:\gamma(\theta)\geq c}\mathbb{E}^{\mu}\left[\zeta_\theta(T)-\lambda^*\right]^+=\mathbb{E}^{\mu}\left[\zeta_{\theta^*}(T)-\lambda^*\right]^+.
\end{equation}
Moreover, if $\theta'$ attains the infimum on the left-hand side, then $\theta'(t)=\theta^*(t)$ almost everywhere in $t\in[0,T]$.
\end{lemma}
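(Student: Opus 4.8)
\emph{Proof strategy.} The idea is to collapse the infimum over the infinite-dimensional class of attack signals into a one-dimensional minimization over $\sigma(\theta):=\bigl(\int_0^T\theta^2(t)\,dt\bigr)^{1/2}$, and then to dispatch that scalar problem with the Cauchy--Schwarz inequality. First I would reduce to one dimension. Under $\mu$, the Wiener integral $\int_0^T\theta(t)\,dw(t)$ is a centered Gaussian random variable with variance $\sigma(\theta)^2$, so the law of $\zeta_\theta(T)$ in \eqref{eq:def_zeta} coincides with that of $e^{\sigma Z-\sigma^2/2}$ with $Z\sim\mathcal N(0,1)$ and $\sigma=\sigma(\theta)$; in particular it depends on $\theta$ only through $\sigma(\theta)$. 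The threshold $\lambda^*$ therefore also depends on $\theta$ only through $\sigma(\theta)$ and, by solving $\alpha(\phi)=\epsilon$ for the log-normal variable $z_\theta(x,T)$ exactly as in the derivation of \eqref{eq:lambda_opt}, equals $\lambda^*=\exp\bigl(\sigma q-\tfrac12\sigma^2\bigr)$ with $q:=\Phi^{-1}(1-\epsilon)$. A short Gaussian-integral computation exploiting the log-normality of $\zeta_\theta(T)$ then yields
\[ \mathbb E^{\mu}\bigl[\zeta_\theta(T)-\lambda^*\bigr]^+ = g\bigl(\sigma(\theta)\bigr),\qquad g(\sigma):=\Phi(\sigma-q)-\epsilon\exp\bigl(\sigma q-\tfrac12\sigma^2\bigr). \]
Thus the lemma reduces to showing that $g(\sigma(\theta))$ is minimized over $\{\theta:\gamma(\theta)\ge c\}$ precisely at $\theta=\theta^*$ (up to a.e.\ equality).

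\emph{Monotonicity of $g$.} Next I would show that $g$ is strictly increasing on $[0,\infty)$. Differentiating and using the identity $\Phi'(\sigma-q)=\Phi'(q)\exp(\sigma q-\tfrac12\sigma^2)$, where $\Phi'$ denotes the standard normal density, gives $g'(\sigma)=\bigl[\Phi'(q)-\epsilon q+\epsilon\sigma\bigr]\exp(\sigma q-\tfrac12\sigma^2)$. Since $\epsilon<\tfrac12$ we have $q>0$, and since $\epsilon=1-\Phi(q)$ the classical Mills-ratio bound $1-\Phi(q)<\Phi'(q)/q$ yields $\Phi'(q)-\epsilon q>0$; hence $g'(\sigma)>0$ for every $\sigma\ge 0$. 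Consequently minimizing $g(\sigma(\theta))$ over the feasible set is equivalent to minimizing $\sigma(\theta)$ over that set.

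\emph{Cauchy--Schwarz and conclusion.} By \eqref{eq:int_theta}, the constraint $\gamma(\theta)\ge c$ reads $\int_0^T\theta(t)\,dt\ge m_0:=\sqrt T\,\Phi^{-1}(c)+Td>0$, and Cauchy--Schwarz gives $m_0^2\le\bigl(\int_0^T\theta(t)\,dt\bigr)^2\le T\int_0^T\theta^2(t)\,dt=T\,\sigma(\theta)^2$, so $\sigma(\theta)\ge m_0/\sqrt T=\sqrt T\,\bar\theta=\sigma(\theta^*)$, with $\bar\theta$ as in \eqref{eq:theta_star}; equality holds iff $\int_0^T\theta(t)\,dt=m_0$ and $\theta$ is a.e.\ equal to a constant, i.e.\ $\theta\equiv\bar\theta$ a.e. Therefore, since $g$ is continuous and strictly increasing and $\sigma(\,\cdot\,)$ attains its minimum $\sqrt T\,\bar\theta$ at the feasible point $\theta^*$, the quantity $g(\sigma(\theta))$ attains its minimum over the feasible set at $\theta=\theta^*$, with value $g(\sigma(\theta^*))=\mathbb E^\mu[\zeta_{\theta^*}(T)-\lambda^*]^+$; this proves \eqref{eq:lem_theta_opt}. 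Moreover, if $\theta'$ also attains this minimum, then $g(\sigma(\theta'))=g(\sigma(\theta^*))$ while $\sigma(\theta')\ge\sigma(\theta^*)$, so strict monotonicity of $g$ forces $\sigma(\theta')=\sigma(\theta^*)$, which in turn forces the Cauchy--Schwarz inequalities above to be equalities, hence $\theta'(t)=\bar\theta=\theta^*(t)$ for almost every $t\in[0,T]$.

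\emph{Main obstacle.} The only genuinely analytic ingredient is the strict monotonicity of $g$, and it hinges on the elementary tail estimate $1-\Phi(q)<\Phi'(q)/q$ for $q>0$, which follows at once from $\int_q^\infty e^{-t^2/2}\,dt<\int_q^\infty (t/q)\,e^{-t^2/2}\,dt$. A minor loose end is the treatment of attack signals with $\int_0^T\theta^2(t)\,dt=\infty$: for these $\zeta_\theta(T)$ is not well defined as a Wiener functional, and in any case $g(\sigma)\to 1$ as $\sigma\to\infty$ whereas $g(\sqrt T\,\bar\theta)<1$, so they cannot attain the infimum. Everything else -- the one-dimensional reduction of the first paragraph and the Cauchy--Schwarz argument of the third -- is routine.
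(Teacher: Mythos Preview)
Your proof is correct and takes a genuinely different route from the paper's. The paper establishes a convex-ordering result: it shows, via a time-change argument (rescaling time by $\Lambda(t)=\int_0^t\theta^2(s)\,ds/\bar\theta^2$ so that $\zeta_\theta(\Lambda^{-1}(\cdot))$ has the same law as $\zeta_{\theta^*}(\cdot)$) together with the submartingale property of $f(\zeta_\theta(\cdot))$, that $\mathbb{E}^\mu f(\zeta_{\theta^*}(T))\le\mathbb{E}^\mu f(\zeta_\theta(T))$ for \emph{every} convex $f$ of linear growth, and then specializes to $f(z)=[z-\lambda^*]^+$. You instead exploit directly that, for deterministic $\theta$, the law of $\zeta_\theta(T)$ is log-normal and depends on $\theta$ only through $\sigma(\theta)$, reducing the problem to a one-variable function $g(\sigma)$ whose strict monotonicity you prove with the Mills-ratio bound. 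Both arguments finish with the same Cauchy--Schwarz step to minimize $\sigma(\theta)$ over the feasible set. Your approach is more elementary and makes the $\theta$-dependence of $\lambda^*$ explicit, which incidentally clarifies why minimizing the last term alone in \eqref{eq:sup_theta} suffices (since, as your formulas show, $1-\lambda^*\epsilon-g(\sigma)=\Phi(q-\sigma)$ is itself monotone in $\sigma$); it also yields the uniqueness claim cleanly via the strict inequality $g'>0$. The paper's approach, on the other hand, proves the stronger statement that $\zeta_{\theta^*}(T)$ is smallest in the convex order among all feasible $\zeta_\theta(T)$, a structural fact not visible from your direct computation and one that would survive if $[\,\cdot\,-\lambda^*]^+$ were replaced by another convex loss.
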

\begin{proof}
The proof strategy is inspired by \cite[Section 5]{cvitanic2001generalized}, which is further attributed to \cite{xu1992duality}.

Let $f:\mathbb{R}\rightarrow [0,\infty)$ be a convex function satisfying the linear growth condition. We will show that
\begin{equation}
\label{eq:lem_theta_opt_gen}
\mathbb{E}^\mu f(\zeta_{\theta^*}(T)) \leq
\mathbb{E}^\mu f(\zeta_{\theta}(T))
\end{equation}
for all $\theta$ such that $\gamma(\theta)\geq c$.
The claim \eqref{eq:lem_theta_opt} follows from 
\eqref{eq:lem_theta_opt_gen} by choosing $f(z)=[z-\lambda^*]^+$.

For each $\theta$ such that $\gamma(\theta)\geq c$, observe that 
$\Lambda(t):=\int_0^t \theta^2(\tau)/\bar{\theta}^2 d\tau$
is a non-decreasing function.
Moreover, consider the inner product $\left<v_1, v_2 \right>:=\int_0^T v_1(t)v_2(t)dt$ of $v_1(t)=\theta(t)$ and $v_2(t)=1$.
Since $
\|v_1\|^2=\int_0^T \theta^2(t)dt, \;\|v_2\|^2=T,
$
and $\gamma(\theta)\geq c$ implies $
\left<v_1, v_2\right>=\int_0^T \theta(t)dt \geq T\bar{\theta}$,
it follows from the Cauchy-Schwarz inequality that
$\int_0^T \theta^2(t)dt \geq T\bar{\theta}^2$. The equality holds if and only if $\theta(t)=\bar{\theta}$ almost everywhere.
Hence, we have $\Lambda(T) \geq T$.
Therefore, if we define a right inverse $\Lambda^{-1}$ of $\Lambda$ by
$\Lambda^{-1}(s):=\inf \left\{t: \Lambda(t) >s\right\}$,
we have $\Lambda^{-1}(T)\leq T$.
Now, notice that a time-changed process
\begin{equation}
\hat{w}(s):=\int_0^{\Lambda^{-1}(s)} \frac{\theta(s)}{\bar{\theta}}dw(s), \;\; 0\leq s \leq T
\end{equation}
is a martingale of the filtration $\hat{\mathcal{F}}(s):=\mathcal{F}(\Lambda^{-1}(s)), \; 0\leq s \leq T$ such that
\vspace{-1ex}
\begin{equation}
\mathbb{E}\hat{w}^2(s)=\int_0^{{\Lambda}^{-1}(s)} \frac{\theta^2(s)}{\bar{\theta}^2}ds=\Lambda(\Lambda^{-1}(s))=s.
\end{equation}
Therefore, $\hat{w}(s)$ is a Brownian motion with respect to $\hat{\mathcal{F}}(s)$.
Moreover, considering the time change $\tau=\Lambda^{-1}(\sigma)$,
\begin{subequations}
\label{eq:sde_zeta_theta}
\begin{align}
&1+\int_0^s \zeta_\theta (\Lambda^{-1}(\sigma)) \bar{\theta}d\hat{w}(\sigma) \\
&=1+
\int_0^{\Lambda^{-1}(s)} \zeta_\theta (\tau) \bar{\theta}d\hat{w}(\Lambda(\tau)) \\
&=1+
\int_0^{\Lambda^{-1}(s)} \zeta_\theta (\tau) \bar{\theta}\frac{\theta(\tau)}{\bar{\theta}}dw(\tau) \\
&=1+
\int_0^{\Lambda^{-1}(s)} \zeta_\theta (\tau) \theta(\tau)dw(\tau) \\
&=\zeta_\theta(\Lambda^{-1}(s)). 
\end{align}
\end{subequations}
In the last step, we used the fact that $\zeta_\theta(t)$ as defined in \eqref{eq:def_zeta} satisfies the stochastic differential equation $d\zeta_\theta(t)=\zeta_\theta(t)\theta(t)dw(t)$ (see, e.g., \cite[Exercise 4.4]{oksendal2003stochastic}). Since
\begin{equation}
\label{eq:sde_zeta_theta_bar}
\zeta_{\theta^*}(t)=1+\int_0^t \zeta_{\theta^*}(\tau) \bar{\theta}dw(\tau),
\end{equation}
comparing \eqref{eq:sde_zeta_theta} and \eqref{eq:sde_zeta_theta_bar}, we conclude that processes $\zeta_{\bar{\theta}}(\cdot)$ and $\zeta_{\theta}(\Lambda^{-1}(\cdot))$ have the same distribution.
From this observation, and from the optional sampling theorem, we have
\begin{equation}
\label{eq:equiv_d}
\mathbb{E}^\mu f(\zeta_{\theta^*}(t))=\mathbb{E}^\mu f(\zeta_{\theta}(\Lambda^{-1}(t)))
\end{equation}
for $0\leq t \leq T$. Also, since $\Lambda^{-1}(T)\leq T$, and since $f(\zeta_\theta(\cdot))$ is a submartingale (a consequence of Jensen's inequality), we obtain
\begin{equation}
\label{eq:submartingale}
\mathbb{E}^\mu f(\zeta_{\theta}(\Lambda^{-1}(T))) \leq
\mathbb{E}^\mu f(\zeta_\theta(T)).
\end{equation}
From \eqref{eq:equiv_d} and \eqref{eq:submartingale}, we obtain \eqref{eq:lem_theta_opt_gen}.
\end{proof}

\subsection{Value of the game}
\label{sec:value}
The result \eqref{eq:value} follows directly as follows:
\begin{subequations}
\begin{align}
&\beta(\theta^*, \phi^*)
=\mathbb{E}^\mu \left[1-\phi(x)\right] \\
&=\mathbb{E}^\mu \left[\mathds{1}_{\left\{x(T)\leq \sqrt{T}\Phi^{-1}(1-\epsilon)\right\}}\right] \\
&=\frac{1}{\sqrt{2\pi}}\int_{-\infty}^{\Phi^{-1}(1-\epsilon)} \exp\left\{-\frac{1}{2}(x-\sqrt{T}\bar{\theta})^2\right\}dx \label{eq:val_game3}\\
&=\Phi\left(\Phi^{-1}(1-\epsilon)-\sqrt{T}\bar{\theta}\right) \\
&=\Phi\left(\Phi^{-1}(1-\epsilon)-\Phi^{-1}(c)-\sqrt{T}d\right).
\end{align}
\end{subequations}
We used the fact that $\frac{1}{\sqrt{T}}x(T)\sim\mathcal{N}(\sqrt{T}\bar{\theta},1)$ in step \eqref{eq:val_game3}.

\section{Discussion}
\subsection{Feedback information structure}
\label{sec:feedback}
The results so far are restricted to games \eqref{eq:minimax} and \eqref{eq:maxmin} in which the attacker must choose the attack signal $\theta(t)$ in an open-loop manner. 
We now consider a modified setup in which the attacker is allowed to choose a state-dependent attack signal.
Specifically, in \eqref{eq:sde}, suppose that $\theta(t)$ is an $\mathcal{F}(t)$-adapted function satisfying $\text{Pr}\{\int_0^T |\theta(t)|dt\leq \infty\}=1$. Such a class of functions includes feedback policies and gives an advantage to the attacker.
We keep the strategy space for the detector the same. The detector's policy is a hypothesis testing algorithm $\phi: C[0,T]\rightarrow \{0,1\}$.

We now demonstrate that the pair of policies $(\theta^*, \phi^*)$ provided in Theorem~\ref{thm:main} is no longer a saddle point in this modified information structure. 
To see this, it is sufficient to construct a feedback policy $\theta'$ such that $\beta(\theta', \phi^*)>\beta(\theta^*, \phi^*)$.
To be concrete, assume $T=1$, $d=1.5$, and $c=1-\epsilon=0.95$.
In this case, the region $x(1)>1.5$ is considered unsafe and $\phi^*$ triggers the alarm if $x(1)\geq \Phi^{-1}(1-\epsilon)\approx 1.645$ as shown in Fig.~\ref{fig:bridge}.
Consider a feedback policy $\theta'(t)=\frac{b-x(t)}{1-t}$, where $b$ is a constant satisfying $d<b<\Phi^{-1}(1-\epsilon)$. In this case, \eqref{eq:sde} becomes
\begin{equation}
\label{eq:bridge}
dx(t)=\frac{b-x(t)}{1-t}dt+dw(t), \;\; x(0)=1.
\end{equation}
The solution to \eqref{eq:bridge} is known as the Brownian bridge, and satisfies $\lim_{t\rightarrow 1} x(t)=b$ $\mu$-almost surely (Fig.~\ref{fig:bridge}).
This means that the feedback policy $\theta'$ attains $\gamma(\theta')=1$ and $\beta(\theta',\phi^*)=1$. 
That is, the attacker wins most dramatically.

We are currently unaware of the saddle point strategies under the feedback information structure.

\begin{figure}[t]
\vspace{2mm}
\centering    \includegraphics[width=\columnwidth]{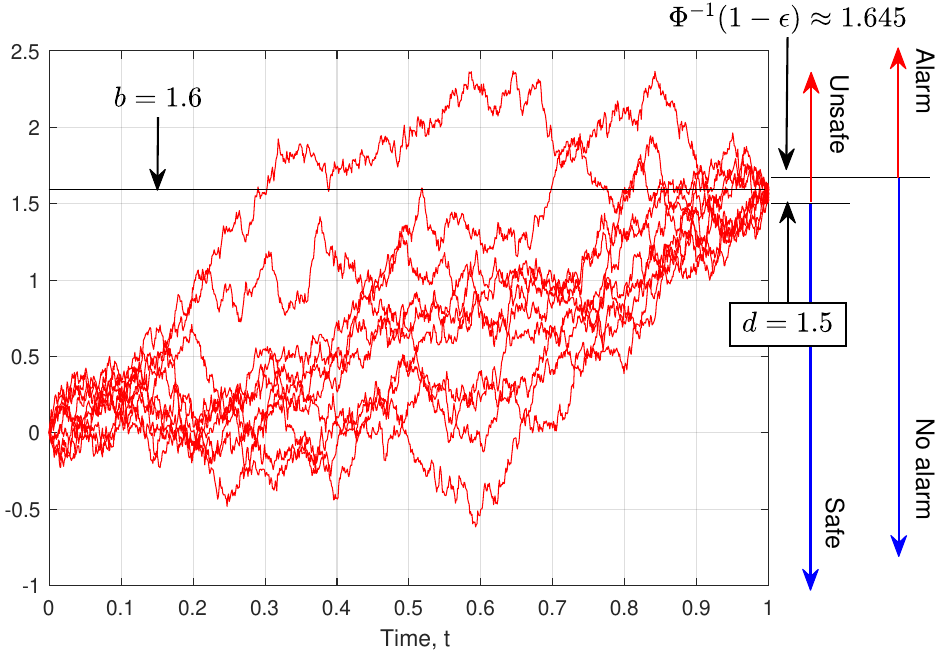}
\vspace{-5mm}
    \caption{Sample paths of the Brownian bridge. }
    \label{fig:bridge}
    \vspace{-7mm}
\end{figure}

\subsection{The first and the second order error exponents}

It is apparent from \eqref{eq:value} that the detection failure rate $\beta(\theta^*,\phi^*)$ diminishes to zero as $T\rightarrow \infty$.
While \eqref{eq:value} already provides a compact formula, it is insightful to characterize it in terms of the first and the second-order error exponents (i.e., the coefficients of the $T$ and $\sqrt{T}$ terms in $\log\beta(\theta^*, \phi^*)$).
To this end, let the attack signal $\theta^*(t)=\bar{\theta}$ be fixed.
Introduce the relative entropy rate $\bar{D}(\mu\|\mu_{\theta^*}):=\limsup_{T\rightarrow \infty}\frac{1}{T}D(\mu\|\mu_{\theta^*})$ and the variance rate $\bar{V}(\mu\|\mu_{\theta^*}):=\limsup_{T\rightarrow \infty}\frac{1}{T}V(\mu\|\mu_{\theta^*})$, where
\begin{align}
D(\mu\|\mu_{\theta^*})&:= \mathbb{E}^{\mu} \left[\log z_{\theta^*}\right] \\
V(\mu\|\mu_{\theta^*})&:=\mathbb{E}^\mu\left[\left(D(\mu\|\mu_{\theta^*})-\log z_{\theta^*}\right)^2 \right] 
\end{align}
with $\log z_{\theta^*}=\int_0^T \theta^*(t)dw(t)+\frac{1}{2}\int_0^T \theta^*(t)^2 dt$.
Using $\theta^*(t)=\bar{\theta}$ and the fact that $w(t)$ is the standard Brownian motion in $\mu$, we obtain
\vspace{-1ex}
\begin{equation}
\label{eq:d_v_rate}
\bar{D}(\mu\|\mu_{\theta^*})=\frac{1}{2}\bar{\theta}^2, \;\;
\bar{V}(\mu\|\mu_{\theta^*})=\bar{\theta}^2.
\end{equation}
Now, using \eqref{eq:phi_lambda_bar} and
\eqref{eq:lambda_opt}, $\beta(\theta^*,\phi^*)$ can be written as
\vspace{-1ex}
\begin{align}
&\beta(\theta^*,\phi^*)=\mu\left(\left\{\log z_{\theta^*} \leq \log \lambda^*\right\}\right) \nonumber \\ 
&=\mu\left(\left\{ \bar{\theta}w(T)+\tfrac{T}{2}\bar{\theta}^2 \leq \sqrt{T}\bar{\theta}\Phi^{-1}(1-\epsilon)-\tfrac{T}{2}\bar{\theta}^2\right\}\right) \nonumber \\
&=\mu\left(\left\{ 
\tfrac{w(T)}{\sqrt{T}}\leq \Phi^{-1}(1-\epsilon)-\sqrt{T}\bar{\theta}
\right\}\right) \nonumber \\
&=\mu\left(\left\{ 
\tfrac{w(T)}{\sqrt{T}}\geq \sqrt{T}\bar{\theta}+\Phi^{-1}(\epsilon)
\right\}\right).
\end{align}
Since $w(T)/\sqrt{T}\sim\mathcal{N}(0,1)$ in $\mu$, by Hoeffding's inequality for sub-Gaussian distributions, we have $\beta(\theta^*, \phi^*)\leq \exp\{-\frac{1}{2}(\sqrt{T}\bar{\theta}+\Phi^{-1}(\epsilon))^2\}$, or
\vspace{-1ex}
\begin{equation}
-\log \beta(\theta^*, \phi^*)\geq \frac{1}{2}T\bar{\theta}^2+\sqrt{T}\bar{\theta}\Phi^{-1}(\epsilon)+\text{const.}
\end{equation}
Using \eqref{eq:d_v_rate}, this inequality can be expressed as
\vspace{-1ex}
\begin{align}
&-\log \beta(\theta^*, \phi^*) \nonumber \\
&\geq T\bar{D}(\mu\|\mu_{\theta^*})+\sqrt{T}\sqrt{\bar{V}(\mu\|\mu_{\theta^*})}\Phi^{-1}(\epsilon)+\text{const.}
\label{eq:2nd_asymptote}
\end{align}
The appearance of $\bar{D}(\mu\|\mu_{\theta^*})$ on the right-hand side of \eqref{eq:2nd_asymptote} is a reminiscent of Stein's lemma \cite[Theorem 11.8.3]{cover1999elements}.
The inequality above shows the achievability of the relative entropy rate as the first-order asymptotes. 
The second term provides a tighter estimate in the regime of finite $T$. 
Notably, \eqref{eq:2nd_asymptote} is consistent with the known characterization of the second-order asymptotes (e.g., \cite{li2014second,watanabe2018second,lungu2024optimal}), despite the major difference between prior works on discrete-time hypothesis tests and our study on continuous-time counterparts. 
The appearance of higher-order terms in \eqref{eq:2nd_asymptote} implies that stealthiness measured by relative entropy alone \cite{bai2017data, guo2018worst,shang2021worst} may not be accurate for moderate values of $T$.


\section{Future work}
While the scope of this paper is restricted to a simple system model \eqref{eq:sde}, the approach we introduced in this paper can be generalized to high-dimensional and nonlinear system models.
By substituting the function $\gamma(\theta)$ with other cost functions, the proposed framework accommodates a broader class of attack scenarios.
Numerical approaches to compute the saddle point solutions (e.g., \cite{patil2023simulator}) in these generalized settings are important research topics in the future.
Saddle point solutions under the feedback information structure need further investigation.
Finally, non-asymptotic (finite sample) analysis of the saddle point value for a broader class of games in view of the recent progress \cite{li2014second,watanabe2018second,lungu2024optimal} in information theory literature will also be a fruitful research direction.

\bibliographystyle{IEEEtran}
\bibliography{references}

\end{document}